\newtheorem{dn}{Definition}[section]
\newtheorem{dl}{Theorem}[section]
\newtheorem{md}{Proposition}[section]
\newtheorem{proposition}{Proposition}[section]
\newtheorem{bd}{Lemma}[section]
\newtheorem{lemma}{Lemma}[section]
\newtheorem{hq}{Corollary}[section]
\newtheorem{nx}{Remark}[section]
\newtheorem{remark}{Remark}[section]
\newtheorem{vd}{Example}[section]
\newcommand{\R}{\mathbb{R}}
\newcommand{\Z}{\mathbb{Z}}
\newcommand{\ity}{\infty}
\newcommand{\bbd}{\begin{bd}}
\newcommand{\ebd}{\end{bd}}
\newcommand{\bdn}{\begin{dn}}
\newcommand{\edn}{\end{dn}}
\newcommand{\bhq}{\begin{hq}}
\newcommand{\ehq}{\end{hq}}
\newcommand{\bdl}{\begin{dl}}
\newcommand{\edl}{\end{dl}}
\newcommand{\bnx}{\begin{nx}}
\newcommand{\enx}{\end{nx}}
\newcommand{\bmd}{\begin{md}}
\newcommand{\emd}{\end{md}}
\newcommand{\bvd}{\begin{vd}}
\newcommand{\evd}{\end{vd}}
\title[Blow up results for structural damped wave model with nonlinear memory]{Blow up results for semi-linear structural damped wave model with nonlinear memory}
\author{Tuan Anh Dao}
\address{Tuan Anh Dao \hfill\break
$\quad$ School of Applied Mathematics and Informatics, Hanoi University of Science and Technology, No.1 Dai Co Viet road, Hanoi, Vietnam \hfill\break
Faculty for Mathematics and Computer Science, TU Bergakademie Freiberg, Pr\"{u}ferstr. 9, 09596, Freiberg, Germany}
\email{anh.daotuan@hust.edu.vn}
\author{Ahmad Z. Fino}
\address{Ahmad Z. Fino \hfill\break
Department of Mathematics, Faculty of Sciences, Lebanese University, P.O. Box 826, Tripoli, Lebanon}
\email{ahmad.fino01@gmail.com; afino@ul.edu.lb}
\begin{document}
\subjclass[2010]{35L71, 35B33, 26A33, 35A01}
\keywords{Structural damping, Nonlinear memory, Fractional Laplacian, Critical exponent}

\begin{abstract}
This article is to study the nonexistence of global solutions to semi-linear structurally damped wave equation with nonlinear memory in $\R^n$ for any space dimensions $n\ge 1$ and for the initial arbitrarily small data being subject to the positivity assumption. We intend to apply the method of a modified test function to establish blow-up results and to overcome some difficulties as well caused by the well-known fractional Laplacian $(-\Delta)^{\sigma/2}$ in structural damping terms.
\end{abstract}

\maketitle

\section{Introduction} \label{Sec.Intro}
Main goal of this paper is concerned with the following Cauchy problem for semi-linear structurally damped wave equation with nonlinear memory
\begin{equation} \label{1}
\begin{cases}
u_{tt}- \Delta  u+ \mu(-\Delta)^{\sigma/2} u_t= \displaystyle \int_0^t (t-s)^{-\gamma} |u(s)|^p\, ds, &\quad x\in \R^n,\, t> 0, \\
u(0,x)= u_0(x),\quad u_t(0,x)=u_1(x), &\quad x\in \R^n,
\end{cases}
\end{equation}
where $\mu>0$, $\sigma \in (0,2)$, for some $\gamma \in (0,1)$ and $p>1$.
\par Taking into considerations the first limit case of the parameter $\sigma$, we want to refer the reader to some previous results on a typical important problem of (\ref{1}) with $\sigma=0$, the so-called semi-linear classical damped wave equations. In particular, the Cauchy problem for the damped wave equation with nonlinear memory was considered in the pioneering paper of Fino \cite{Fino2011}. With the suitable choice of $\gamma \in (0,1)$, the author succeeded to prove the global existence of small data solutions in the low space dimensions $1\le n\le 3$ by using the weighted energy method and the blow-up result for any dimensional space by the test function method as well. After Fino \cite{Fino2011}, the authors in \cite{Dabbicco1,YangaShiaZhu} developed his results in several different approaches. More in detail, in \cite{YangaShiaZhu} Yanga-Shia-Zhu improved the global result by removing the compactness of the support on the initial data. D'Abbicco in \cite{Dabbicco1} has made suitably different choice of initial data spaces and solution spaces depending on each distinguished range of $\gamma \in (0,1)$ to extend the global existence results of Fino in space dimensions $n\le 5$. Thereafter, involving the scenario of (\ref{1}) with $\sigma=1$ in \cite{Dabbicco2} D'Abbicco has studied the following semi-linear wave equation with structural damping and nonlinear memory:
\begin{equation} \label{2}
\begin{cases}
u_{tt}- \Delta  u+ \mu(-\Delta)^{1/2} u_t= \displaystyle \int_0^t (t-s)^{-\gamma} |u(s,\cdot)|^p\, ds, &\quad x\in \R^n,\, t> 0, \\
u(0,x)= u_0(x),\quad u_t(0,x)=u_1(x), &\quad x\in \R^n.
\end{cases}
\end{equation}
Thanks to the special structure of the corresponding linear problem, the global existence results for (\ref{2}) were obtained in any dimensional cases $n\ge 2$. Moreover, a counterpart result for nonexistence of global solutions was also indicated to find the following critical exponent:
$$ p_{crit}(n,\gamma):= \max\big\{p_\gamma(n),\gamma^{-1}\big\}, \quad \text{ where }p_\gamma(n):=1+\frac{3-\gamma}{n-2+\gamma} $$
by the application of test function method and a maximum principle (see more \cite{DabbiccoReissig}). For the purpose of further considerations, the author in the cited paper discussed several results for the global existence of small data solutions to (\ref{1}) regarding not only the general cases $\sigma \in (0,2)$, but also the other limit case $\sigma= 2$. However, it still remains an open problem as far as to show nonexistence results for (\ref{1}) in the general cases. From this observation, the main novelty of this paper is to look for these results in any space dimensions.
\par The essential difficulty to investigate the nonexistence of global solutions to (\ref{1}), where $\sigma$ is assumed to be a fractional number $\in (0,2)$, is to deal with the fractional Laplacian $(-\Delta)^{\sigma/2}$, the so-called nonlocal operators, in general. This difficulty does not happen in the special case $\sigma=1$ appearing in \cite{Dabbicco2,DabbiccoReissig} by the aid of the nonnegativity of fundamental solutions, which cannot be expected for any $\sigma \in (0,2)$. As we can see, the authors in the cited papers restrict themselves in the case where the first data $u_0=0$ and the second data $u_1$ is non-negative. This restriction leads them to the nonnegativity of solutions which comes into play, via Ju's inequality \cite{Ju} or \cite[Appendix]{Finokirane}, in the proofs of nonexistence of global solutions. Unfortunately, the method used in \cite{Dabbicco2,DabbiccoReissig} is not so well-working when we want to discuss the case of possibly sign-changing data $u_1$ or even if $u_0$ is not identically zero. Here we want to point out that our main result in this paper is not only to cover D'Abbicco's result in \cite{Dabbicco2}, but also to overcome the above mentioned difficulties. Quite recently, Dao-Reissig in \cite{DaoReissig} have succeeded to prove blow-up results to determine the critical exponents for the following Cauchy problem for semi-linear structurally damped $\sigma$-evolution models:
$$ \begin{cases}
u_{tt}+ (-\Delta)^\sigma u+ (-\Delta)^{\delta} u_t=|u|^p, &\quad x\in \R^n,\, t> 0, \\
u(0,x)= u_0(x),\quad u_t(0,x)=u_1(x), &\quad x\in \R^n,
\end{cases} $$
where $\sigma \ge 1$ and $\delta\in [0,\sigma)$ are any fractional numbers, by using a modified test function method. In this connection, our key tool used in this paper is strongly motivated by the paper \cite{DaoReissig}. More precisely, another modified test function method (see \cite{BonforteVazquez}) can be applied to catch the desired nonexistence results for (\ref{1}) in any space dimensions. \medskip

{\bf Notations} \\
Throughout the present paper, we use the following notations.
\begin{itemize}[leftmargin=*]
\item For any $r \in \R$, we denote $[r]_+:= \max\{r,0\}$ as its positive part, and $[r]:= \max \big\{k \in \Z \,\, : \,\, k\le r \big\}$ as its integer part.
\item For later convenience, we put
$$p_c:=1+\frac{2+(1-\gamma)(2-\tilde{\sigma})}{\big[n-2+\gamma(2-\tilde{\sigma})\big]_+},$$
where $\tilde{\sigma}:=\min\{\sigma,1\}$.
\item Hereafter $C$ denotes a suitable positive constant and may have different value from line to line.
\item We write $f\lesssim g$ when $f\le Cg$, and $f \approx g$ when $g\lesssim f\lesssim g$.
\end{itemize}

Our main result reads as follows.
\begin{dl}[\textbf{Main result}] \label{blow-up}
Let $\sigma \in (0,2)$ and $n \geq1$. We assume that the initial data $(u_0,u_1)\in H^\sigma(\mathbb{R}^n)\times L^2(\mathbb{R}^n)$ such that
$$u_0 \in L^1 \quad \text{ and }\quad (-\Delta)^{\sigma/2} u_0 + u_1 \in L^1,$$
satisfy the relation
\begin{equation} \label{optimal9.1}
\int_{\R^n} \big(u_1(x)+ \mu (-\Delta)^{\sigma/2} u_0(x)\big)dx >0.
\end{equation}
Moreover, we suppose one of the following conditions:
\begin{itemize}[leftmargin=*]
\item $p>1$ if $n\le [2-\gamma(2-\tilde{\sigma})]$ and for any $\gamma \in (0,1)$,
\item $p \in (1,p_c]$ if $[2-\gamma(2-\tilde{\sigma})]< n\le 2$ and for any $\gamma \in (0,1)$, or $n> 2$ and for any $\gamma \in \big[\frac{n-2}{n},1\big)$,
\item $p \in (1,\gamma^{-1})$ if $n\ge 3$ and for any $\gamma \in \big(0,\frac{n-2}{n}\big)$.
\end{itemize}
Then, there is no global (in time) weak solution to (\ref{1}).
\end{dl}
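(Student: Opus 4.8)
The plan is to argue by contradiction via a modified test function method in the spirit of \cite{DaoReissig} and \cite{BonforteVazquez}. Suppose \eqref{1} admits a global weak solution $u$, so that, by definition, the associated integral identity holds for every $T>0$ and every admissible test function $\varphi$. Fix a cut-off $\eta\in C^\infty([0,\infty))$ with $\eta\equiv1$ on $[0,1/2]$, supported in $[0,1]$, $0\le\eta\le1$, set $\eta_T(t):=\eta(t/T)$ and take $\ell$ a large positive integer; as spatial weight take $\psi_R(x):=\psi(x/R)$ where $\psi(x)=(1+|x|^2)^{-\kappa/2}$ with $\kappa>n$ chosen suitably — a ``modified'' test function of Bonforte--V\'azquez type, designed to cooperate with the nonlocal operator. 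To absorb the memory kernel, let $\Phi_T$ solve the Abel equation $\int_s^T(t-s)^{-\gamma}\Phi_T(t)\,dt=\Gamma(1-\gamma)\,\eta_T(s)^{\ell}$, i.e.\ $\Phi_T=D^{1-\gamma}_{t|T}[\eta_T^{\ell}]$ is the right Riemann--Liouville derivative of order $1-\gamma$, and put $\varphi_{T,R}(t,x):=\Phi_T(t)\psi_R(x)$. Inserting $\varphi_{T,R}$ into the weak formulation, applying Fubini to the memory term (which turns it into $\Gamma(1-\gamma)\iint |u|^p\eta_T^{\ell}\psi_R$), using self-adjointness of $(-\Delta)^{\sigma/2}$ to move it onto $\varphi_{T,R}$, and $u_1+\mu(-\Delta)^{\sigma/2}u_0\in L^1$, one arrives at
\begin{align*}
\Gamma(1-\gamma)\,I_{T,R}&+\int_{\R^n}\big(u_1+\mu(-\Delta)^{\sigma/2}u_0\big)\psi_R\,dx \\
&=\iint_{(0,T)\times\R^n} u\,\big(\partial_t^2\varphi_{T,R}-\Delta\varphi_{T,R}-\mu(-\Delta)^{\sigma/2}\partial_t\varphi_{T,R}\big)\,dx\,dt+\int_{\R^n} u_0\,\Phi_T'(0)\,\psi_R\,dx,
\end{align*}
where $I_{T,R}:=\iint |u(t,x)|^p\,\eta_T(t)^{\ell}\psi_R(x)\,dx\,dt\ge0$. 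Since $\Phi_T$ scales like $T^{-(1-\gamma)}$, the last term is $\lesssim T^{-(2-\gamma)}\|u_0\|_{L^1}$, hence negligible in the limits below; and by \eqref{optimal9.1}, dominated convergence and $u_1+\mu(-\Delta)^{\sigma/2}u_0\in L^1$, the second term on the left converges to $\int_{\R^n}(u_1+\mu(-\Delta)^{\sigma/2}u_0)\,dx=:2\varepsilon_0>0$, so it is $\ge\varepsilon_0$ for $R$ large.

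Next I would estimate the right-hand side by Hölder's inequality with exponents $p,p'$, pairing $|u|$ against $(\eta_T^{\ell}\psi_R)^{1/p}$. The $\partial_t^2\varphi_{T,R}$, $\Delta\varphi_{T,R}$ and structural-damping $(-\Delta)^{\sigma/2}\partial_t\varphi_{T,R}$ terms are each bounded by $I_{T,R}^{1/p}\,T^{a_j}R^{b_j}$, the powers being produced by the explicit time integrals $\int_0^T|\partial_t^m\Phi_T|^{p'}\eta_T^{-\ell p'/p}\,dt$ (finite once $\ell$ is large, and scaling like a power of $T$; the essential point is that $\Phi_T=D^{1-\gamma}_{t|T}[\eta_T^{\ell}]$ scales like $T^{-(1-\gamma)}$, so every term carries an extra factor $T^{-(1-\gamma)}$ relative to the naive choice $\varphi=\eta_T^{\ell}\psi_R$, and this gain is exactly what yields the exponents $p_c$ and $\gamma^{-1}$) and by the spatial integrals $\int_{\R^n}|(-\Delta)^{j/2}\psi_R|^{p'}\psi_R^{-p'/p}\,dx$ for $j\in\{0,2,\sigma\}$. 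The crux — exactly the ``essential difficulty'' flagged in the introduction — is the nonlocal case $j=\sigma$: in contrast with $\Delta\psi_R$, the function $(-\Delta)^{\sigma/2}\psi_R$ is \emph{not} dominated pointwise by $R^{-\sigma}\psi_R$ (its tail decays only like $R^{-\sigma}\langle x/R\rangle^{-(n+\sigma)}$), so one must instead verify directly, from the singular-integral representation of $(-\Delta)^{\sigma/2}$ and a careful choice of $\kappa$, that $\int_{\R^n}|(-\Delta)^{\sigma/2}\psi_R|^{p'}\psi_R^{-p'/p}\,dx\lesssim R^{\,n-\sigma p'}$. I expect this fractional estimate to be the main obstacle; it is also what makes the argument insensitive to the sign of $u$ and independent of any nonnegativity of the fundamental solution, unlike the special case $\sigma=1$ of \cite{Dabbicco2} and \cite{DabbiccoReissig}.

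Finally I would take the scale $T=R^{2-\tilde\sigma}$. A direct computation then shows that all three terms are $\lesssim I_{T,R}^{1/p}R^{\kappa_0}$ for a single exponent $\kappa_0=\kappa_0(n,\sigma,\gamma,p)$ which is $\le0$ \emph{precisely when} $p\le p_c$; in particular $\kappa_0<0$ for every $p>1$ exactly when $n-2+\gamma(2-\tilde\sigma)\le0$, i.e.\ $n\le[2-\gamma(2-\tilde\sigma)]$ (the first regime). Combining with the lower bound and using Young's inequality to absorb a small multiple of $I_{T,R}$ into the left-hand side gives $\varepsilon_0\lesssim R^{\kappa_0 p'}+o(1)$ as $R\to\infty$. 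For $p<p_c$ this is a contradiction since $\kappa_0<0$. For the endpoint $p=p_c$ one has $\kappa_0=0$; then the same estimate yields $\sup_{T,R}I_{T,R}<\infty$, hence $|u|^p\in L^1(\R^n\times(0,\infty))$, and feeding this back with a more careful splitting of the right-hand side (exploiting that $|u|^p$ carries arbitrarily small mass on the regions where $t$ is comparable to $T$, in the classical manner of \cite{Fino2011} and \cite{Dabbicco1}) upgrades the estimate to a strict one and again yields a contradiction. Lastly, in the regime where $\gamma^{-1}>p_c$ (i.e.\ $n\ge3$, $\gamma<\tfrac{n-2}{n}$), the $T$-exponent of the $\Delta\varphi_{T,R}$ term is negative exactly when $\gamma p<1$ (the other two $T$-exponents are always negative); hence for $p<\gamma^{-1}$ one abandons the scale $T=R^{2-\tilde\sigma}$, fixes $R$ large enough to keep the data term $\ge\varepsilon_0>0$, and lets $T\to\infty$, whereupon every $T^{a_j p'}R^{b_j p'}$ and the negligible term tend to $0$, forcing $\varepsilon_0\le0$. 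These three cases exhaust the statement.
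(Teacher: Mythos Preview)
Your plan is essentially the paper's proof: test against $\varphi=D^{1-\gamma}_{t|T}(\text{time weight})\cdot\psi_R$, estimate by H\"older, couple via $T=R^{2-\tilde{\sigma}}$, and split into the same three regimes. The differences lie in the concrete choices. For the time weight the paper uses $w(t)=(1-t/T)^\beta$, whose right fractional derivatives are explicit powers of $(1-t/T)$. For the spatial weight it takes a specific $\phi$ that equals $1$ on $B_1$ and decays like $\langle x\rangle^{-n-\sigma}$ outside, and proves the \emph{pointwise} bound $|(-\Delta)^{\sigma/2}\phi|\lesssim\phi$; this immediately delivers your key fractional integral estimate $\int|(-\Delta)^{\sigma/2}\phi_R|^{p'}\phi_R^{-p'/p}\lesssim R^{n-\sigma p'}$ and removes the ``careful choice of $\kappa$'' you anticipate as the main obstacle.

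The one place where your outline would need adjustment is the endpoint $p=p_c$. The paper's weight satisfies $\phi\equiv1$ on $B_1$, so $\Delta\phi_R$ is supported in $\{|x|\ge R\}$; after establishing $|u|^p\in L^1$ it rescales with a second parameter $R=T^{1/(2-\tilde{\sigma})}K^{-1/(2-\tilde{\sigma})}$, notes that the critical $\Delta$-term carries only $\tilde I_R^{1/p}$ with $\tilde I_R=\iint_{|x|\ge R}|u|^p\tilde\varphi\to0$ as $T\to\infty$, and then lets $K\to\infty$. Your proposed fix---small $|u|^p$-mass on $\{t\sim T\}$---does not apply directly here, because once you pass through $\Phi_T=D^{1-\gamma}_{t|T}\eta_T^\ell$ the time derivatives of $\Phi_T$ are no longer supported near $t\sim T$ (the right fractional derivative is nonlocal). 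The spatial route is the right one, but it needs the spatial weight to be constant near the origin, which your $(1+|x|^2)^{-\kappa/2}$ is not; replacing it by a function that is flat on $B_1$ (as the paper does) fixes this with no change to the rest of your argument.
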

The remainder of this paper is organized as follows: Section \ref{Sec.Pre} is to collect some preliminaries. We devote to the proof of the main result in Section \ref{Sec.Proof}.

\section{Preliminaries} \label{Sec.Pre}
In this section, we present some definitions and results concerning the fractional integrals and
fractional derivatives that will be used hereafter.

\bdn (Absolutely continuous functions)\cite[Chapter~1]{SKM}\\
A function $f:[a,b]\rightarrow\mathbb{R}$ with $a,b\in\mathbb{R}$, is absolutely continuous if and only if there exists a Lebesgue summable function $\varphi\in L^1(a,b)$ such that 
$$f(t)=f(a)+\int_{a}^t\varphi(s)\,ds.$$ 
The space of these functions is denoted by $AC[a,b]$. Moreover, for all $m\geq0$, we define
$$AC^{m+1}[a,b]:= \big\{f:[a,b]\rightarrow\mathbb{R}\;\text{such that}\;D^m f\in
AC[a,b]\big\},$$
where $D^m=\frac{d^m}{dt^m}$ is the usual $m$ times derivative.
\edn

\bdn(Riemann-Liouville fractional integrals)\cite[Chapter~1]{SKM}\\
Let $f\in L^1(0,T)$ with $T>0$. The Riemann-Liouville left- and right-sided fractional integrals of order $\alpha\in(0,1)$ are, respectively, defined by
\begin{equation}\label{}
I^\alpha_{0|t}f(t):=\frac{1}{\Gamma(\alpha)}\int_{0}^t(t-s)^{-(1-\alpha)}f(s)\,ds, \quad t>0,
\end{equation}
and
\begin{equation}\label{}
I^\alpha_{t|T}f(t):=\frac{1}{\Gamma(\alpha)}\int_t^{T}(s-t)^{-(1-\alpha)}f(s)\,ds, \quad t<T,
\end{equation}
where $\Gamma$ is the Euler gamma function.
\edn

\bdn(Riemann-Liouville fractional derivatives)\cite[Chapter~1]{SKM}\\
Let $f\in AC[0,T]$ with $T>0$. The Riemann-Liouville left- and right-sided fractional derivatives of order $\alpha\in(0,1)$ are, respectively, defined by
\begin{equation}\label{}
D^\alpha_{0|t}f(t):=\frac{d}{dt}I^{1-\alpha}_{0|t}f(t)=\frac{1}{\Gamma(1-\alpha)}\frac{d}{dt}\int_{0}^t(t-s)^{-\alpha}f(s)\,ds, \quad t>0,
\end{equation}
and
\begin{equation}\label{}
D^\alpha_{t|T}f(t):=-\frac{d}{dt}I^{1-\alpha}_{t|T}f(t)=-\frac{1}{\Gamma(1-\alpha)}\frac{d}{dt}\int_t^{T}(s-t)^{-\alpha}f(s)\,ds, \quad t<T.
\end{equation}
\edn

\begin{proposition}(Integration by parts formula)\cite[(2.64)~p.46]{SKM}\\
Let $T>0$ and $\alpha\in(0,1)$. The fractional integration by parts formula
\begin{equation}\label{IP}
\int_{0}^{T}f(t)D^\alpha_{0|t}g(t)\,dt \;=\; \int_{0}^{T} 
g(t)D^\alpha_{t|T}f(t)\,dt
\end{equation}
is valid for every $f\in I^\alpha_{t|T}(L^p(0,T))$ and $g\in I^\alpha_{0|t}(L^q(0,T))$ such that $\frac{1}{p}+\frac{1}{q}\leq 1+\alpha$ with $p,q>1$, where
$$I^\alpha_{0|t}(L^q(0,T)):=\left\{f= I^\alpha_{0|t}h,\,\, h\in L^q(0,T)\right\}$$
and
$$I^\alpha_{t|T}(L^p(0,T)):=\left\{f= I^\alpha_{t|T}h,\,\, h\in L^p(0,T)\right\}.$$
\end{proposition}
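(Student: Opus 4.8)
The plan is to use the test function method adapted to the structural damping, in the spirit of \cite{DaoReissig,BonforteVazquez}, combined with the fractional integration by parts formula \eqref{IP} to handle the nonlinear memory term. Suppose, for contradiction, that a global weak solution $u$ exists. First I would write down the weak formulation of \eqref{1}: for a suitable test function $\varphi(t,x)$ with compact support in $x$ and appropriate decay in $t$, integrating \eqref{1} against $\varphi$ and moving derivatives onto $\varphi$ yields
\begin{equation*}
\int_0^\infty\!\!\int_{\R^n} \left(\int_0^t (t-s)^{-\gamma}|u(s,x)|^p\,ds\right)\varphi\,dx\,dt + \int_{\R^n}\big(u_1+\mu(-\Delta)^{\sigma/2}u_0\big)\varphi(0,x)\,dx = \int_0^\infty\!\!\int_{\R^n} u\,\big(\varphi_{tt}-\Delta\varphi - \mu(-\Delta)^{\sigma/2}\varphi_t\big)\,dx\,dt,
\end{equation*}
where the sign on the damping term is flipped since $\partial_t$ is adjoint to $-\partial_t$ (the self-adjointness of $(-\Delta)^{\sigma/2}$ being used on the term containing $u_0$). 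The memory integral is recognized, up to the factor $\Gamma(1-\gamma)$, as $I^{1-\gamma}_{0|t}(|u|^p)$. Choosing $\varphi(t,x)=\psi(t)\,\phi(x)$ with $\psi$ supported in $[0,T]$ and applying \eqref{IP} with $\alpha=1-\gamma$, the double time integral against $\psi$ becomes $\int_0^T |u(s,x)|^p\, D^{1-\gamma}_{t|T}\psi(s)\,ds$, which transfers the memory onto the test function.

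Next I would make the scaling choice: take $\phi(x)=\Phi(|x|^2/R^2)$ for a fixed cutoff $\Phi$ (smooth, $\equiv 1$ on $[0,1]$, supported in $[0,2]$) and $\psi(t)=\eta(t/R^{2/\tilde\sigma})$ (or the appropriate anisotropic scaling reflecting that the structural damping $(-\Delta)^{\tilde\sigma/2}\partial_t$ governs the large-time behaviour, so time scales like $x^{\tilde\sigma}$... actually like the diffusion scale $R^{2/\tilde\sigma}$ for the effective equation $u_t + (-\Delta)^{1-\tilde\sigma/2}\cdots$ — more carefully, since the model is parabolic-like with symbol $|\xi|^2/(\mu|\xi|^{\tilde\sigma})$, the natural scaling pairs $t\sim R^{2-\tilde\sigma}$ with $|x|\sim R$). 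Using the positivity assumption \eqref{optimal9.1}, the boundary term is strictly positive and bounded below by a constant independent of $R$, so we obtain
\begin{equation*}
\int\!\!\int |u|^p\,\varphi \;\lesssim\; \Big|\int\!\!\int u\,\big(\varphi_{tt}-\Delta\varphi-\mu(-\Delta)^{\sigma/2}\varphi_t\big)\Big| \;\lesssim\; \Big(\int\!\!\int |u|^p\varphi\Big)^{1/p}\Big(\int\!\!\int \varphi^{-1/(p-1)}\big|\varphi_{tt}-\Delta\varphi-\mu(-\Delta)^{\sigma/2}\varphi_t\big|^{p'}\Big)^{1/p'}
\end{equation*}
by Hölder. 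The key computation is to estimate the $R$-power of the integral $\mathcal{I}_R := \int\!\!\int \varphi^{-1/(p-1)}|\varphi_{tt}-\Delta\varphi-\mu(-\Delta)^{\sigma/2}\varphi_t|^{p'}$, using that the $x$-integration contributes $R^n$, that $\varphi_{tt}$ and the memory-transferred term $D^{1-\gamma}_{t|T}\psi$ contribute powers of $R^{-(2-\tilde\sigma)}$ (or its fractional analog), and crucially that $(-\Delta)^{\sigma/2}\varphi_t$ obeys the scaling $R^{-\sigma}\cdot R^{-(2-\tilde\sigma)}$ via the homogeneity of the fractional Laplacian applied to the dilated cutoff. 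After bookkeeping, absorbing $(\int\!\!\int|u|^p\varphi)^{1/p}$ to the left gives $\int\!\!\int|u|^p\varphi \lesssim R^{\theta}$ for an exponent $\theta=\theta(n,\gamma,\tilde\sigma,p)$ that is $\le 0$ precisely in the subcritical and critical ranges listed in the statement; in the strictly subcritical case $\theta<0$ forces $u\equiv 0$ as $R\to\infty$, contradicting \eqref{optimal9.1}, while in the critical case $\theta=0$ one runs the standard refinement (the Hölder step restricted to the annulus where $\nabla\varphi,\varphi_{tt}$ are supported, plus the fact that $\int\!\!\int|u|^p\varphi$ is finite and hence its tail over that annulus vanishes) to again reach a contradiction.

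The main obstacle is controlling $(-\Delta)^{\sigma/2}\varphi_t$: unlike the local operators $\varphi_{tt}$ and $\Delta\varphi$, the fractional Laplacian is nonlocal, so $(-\Delta)^{\sigma/2}\phi_R$ is not supported in the annulus of $\phi_R$ and does not simply scale like $R^{-\sigma}$ times a bounded function — its tail decays only polynomially, like $|x|^{-n-\sigma}$. Following the modified test function technique of \cite{BonforteVazquez}, I would choose $\Phi$ so that $\phi_R = \Phi(\cdot/R)$ with $\Phi$ itself chosen (or modified) so that $(-\Delta)^{\sigma/2}\Phi$ is well-behaved — e.g. take $\Phi$ to be a power of a Bessel-type or explicitly computable profile, or use the representation $(-\Delta)^{\sigma/2}\phi_R(x)=R^{-\sigma}\big((-\Delta)^{\sigma/2}\Phi\big)(x/R)$ together with a careful split of the singular integral into near-field (where $|(-\Delta)^{\sigma/2}\Phi|\lesssim 1$) and far-field (where it decays) parts, and then bound $\int_{\R^n}\varphi^{-1/(p-1)}|(-\Delta)^{\sigma/2}\varphi_t|^{p'}\,dx$ by exploiting that on the far-field region $\varphi$ is bounded below so the weight does not blow up. This is also where $\tilde\sigma=\min\{\sigma,1\}$ enters: for $\sigma\ge 1$ the effective decay rate saturates at that of $\sigma=1$, which is exactly what the exponent $p_c$ and the case distinctions in the theorem encode. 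Once that estimate is in hand, the rest is the routine Hölder-and-scaling argument above.
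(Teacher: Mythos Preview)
Your proposal does not address the stated Proposition at all. The Proposition is the classical Riemann--Liouville fractional integration-by-parts identity, quoted verbatim from \cite[(2.64)]{SKM}; the paper does not prove it but merely records it as a preliminary tool. What you have sketched is instead (a rough outline of) the proof of Theorem~\ref{blow-up}, the main blow-up result. As a proof of the Proposition, therefore, the proposal is simply off-target.

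If your intended target was Theorem~\ref{blow-up}, the broad strategy does match the paper's, but two concrete points in your sketch would not work as written. First, you propose a compactly supported spatial cutoff $\Phi(|x|^2/R^2)$. The paper deliberately avoids this: $(-\Delta)^{\sigma/2}$ of a compactly supported function is \emph{not} compactly supported and decays only like $|x|^{-n-\sigma}$, so on the region where $\phi_R\equiv 0$ the weight $\phi_R^{-1/(p-1)}$ is infinite while $|(-\Delta)^{\sigma/2}\phi_R|^{p'}$ is nonzero, and the integral $\int \phi_R^{-1/(p-1)}|(-\Delta)^{\sigma/2}\phi_R|^{p'}\,dx$ diverges. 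The paper's remedy is to take $\phi(x)=\langle x\rangle^{-n-\sigma}$ for $|x|\ge1$ (never vanishing) and to prove the pointwise bound $|(-\Delta)^{\sigma/2}\phi|\lesssim \phi$ (Lemma~\ref{lemma4}); this is the technical heart of the argument, not a side remark that can be waved through. Second, your explanation of $\tilde\sigma=\min\{\sigma,1\}$ is inaccurate: it does not come from any ``saturation of decay'' of the fractional Laplacian, but from balancing the three contributions with scalings $T^{-2}$, $R^{-2}$, $T^{-1}R^{-\sigma}$ arising from $\varphi_{tt}$, $\Delta\varphi$, $(-\Delta)^{\sigma/2}\varphi_t$ respectively; the optimal coupling is $R=T^{1/(2-\tilde\sigma)}$, and which of $J_2$ or $J_3$ dominates switches exactly at $\sigma=1$. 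The paper also handles the critical case $p=p_c$ via a second parameter $K$ (taking $R=T^{1/(2-\tilde\sigma)}K^{-1/(2-\tilde\sigma)}$) and the range $p<\gamma^{-1}$ by decoupling $R$ and $T$ (e.g.\ $R=\ln T$), neither of which appears in your outline.
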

\begin{remark}
A simple sufficiency condition for functions $f$ and $g$ to satisfy (\ref{IP}) is that $f,g\in C[0,T]$ such that
$D^\alpha_{t|T}f(t),D^\alpha_{0|t}g(t)$ exist at every point $t\in[0,T]$ and are continuous.
\end{remark}

\begin{proposition}\cite[Section~2.1]{KSTr}\\
Let $0<\alpha<1$ and $T>0$. Then, we have the following identities:
\begin{equation}\label{7}
    D^\alpha_{0|t}I^\alpha_{0|t}f(t)=f(t),\,\hbox{a.e. $t\in(0,T)$}, \quad\hbox{for all}\,f\in L^r(0,T) \text{ with } 1\leq r\leq\infty,
\end{equation}
and
\begin{equation}\label{5}
   (-1)^m D^m.D^\alpha_{t|T}f=D^{m+\alpha}_{t|T}f \quad\hbox{for all}\,f\in AC^{m+1}[0,T].
\end{equation}
\end{proposition}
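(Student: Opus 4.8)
The two identities are classical; the plan is to read them off directly from the definitions of the Riemann--Liouville integrals and derivatives recalled above, the only substantial analytic inputs being the index (semigroup) law for fractional integrals and the Lebesgue differentiation theorem.

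To prove \eqref{7}, first note that since $(0,T)$ has finite length one has $L^r(0,T)\hookrightarrow L^1(0,T)$ for every $r\in[1,\infty]$, so it suffices to treat $f\in L^1(0,T)$. By definition $D^\alpha_{0|t}I^\alpha_{0|t}f=\frac{d}{dt}\big(I^{1-\alpha}_{0|t}I^\alpha_{0|t}f\big)$, so the heart of the matter is the index law
\[
I^{1-\alpha}_{0|t}I^\alpha_{0|t}f(t)=I^{1}_{0|t}f(t)=\int_0^t f(s)\,ds .
\]
I would establish this by writing out the iterated integral, interchanging the order of integration by Fubini's theorem (legitimate because the composite kernel is integrable for $f\in L^1$), and then performing the substitution $s=\tau+(t-\tau)r$ in the inner integral, which collapses it to the Beta integral $\int_0^1(1-r)^{-\alpha}r^{\alpha-1}\,dr=B(\alpha,1-\alpha)=\Gamma(\alpha)\Gamma(1-\alpha)$; this exactly cancels the normalising constants $\Gamma(\alpha)\Gamma(1-\alpha)$. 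Once the index law is in hand, \eqref{7} follows at once: $t\mapsto\int_0^t f(s)\,ds$ is absolutely continuous, hence differentiable almost everywhere on $(0,T)$ with derivative $f(t)$.

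To prove \eqref{5}, recall that, for $m\in\mathbb{N}$ and $\alpha\in(0,1)$, the right-sided Riemann--Liouville derivative of order $m+\alpha$ is by definition $D^{m+\alpha}_{t|T}f=(-1)^{m+1}\frac{d^{m+1}}{dt^{m+1}}I^{1-\alpha}_{t|T}f$, while $D^\alpha_{t|T}f=-\frac{d}{dt}I^{1-\alpha}_{t|T}f$ from the definitions above. Purely formally, $(-1)^m D^m D^\alpha_{t|T}f=(-1)^m\frac{d^m}{dt^m}\big(-\frac{d}{dt}I^{1-\alpha}_{t|T}f\big)=(-1)^{m+1}\frac{d^{m+1}}{dt^{m+1}}I^{1-\alpha}_{t|T}f=D^{m+\alpha}_{t|T}f$, so the content of \eqref{5} is the assertion that this formal computation is legitimate, i.e.\ that $I^{1-\alpha}_{t|T}f$ really is $(m+1)$-times differentiable and that the iterated one-dimensional derivatives may be amalgamated into $d^{m+1}/dt^{m+1}$. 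To justify this I would exploit the hypothesis $f\in AC^{m+1}[0,T]$ via Taylor's formula with integral remainder expanded at the endpoint $T$,
\[
f(t)=\sum_{k=0}^{m}\frac{f^{(k)}(T)}{k!}(t-T)^k\pm I^{m+1}_{t|T}f^{(m+1)}(t),
\]
the remainder being an $(m+1)$-fold right-sided fractional integral of the $L^1$-function $f^{(m+1)}$. Applying $I^{1-\alpha}_{t|T}$ and using the index law once more turns the remainder into $\pm I^{m+2-\alpha}_{t|T}f^{(m+1)}$, whose kernel $(s-t)^{m+1-\alpha}$ has positive exponent; differentiating under the integral $m+1$ times is then admissible (each differentiation lowers the exponent by one, with a vanishing boundary contribution until the exponent reaches $-\alpha>-1$) and manifestly commutes, while the polynomial part contributes explicit multiples of $(T-t)^{k+1-\alpha}$ with classical derivatives. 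Collecting the pieces shows that $(-1)^m D^m D^\alpha_{t|T}f$ and $D^{m+\alpha}_{t|T}f$ are both equal to $(-1)^{m+1}\frac{d^{m+1}}{dt^{m+1}}I^{1-\alpha}_{t|T}f$ as honest (almost-everywhere) derivatives.

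I expect \eqref{5} to be the more delicate of the two: its algebra is a single line, but the regularity bookkeeping --- confirming that the repeated differentiations of $I^{1-\alpha}_{t|T}f$ are all admissible and can be fused, and that the kernel singularity $(s-t)^{-\alpha}$ near $s=t$ causes no trouble --- is precisely the step where the strength of the hypothesis $f\in AC^{m+1}[0,T]$, as opposed to the mere integrability that sufficed for \eqref{7}, gets consumed. Identity \eqref{7}, by contrast, is a self-contained Fubini-plus-Beta-function computation rounded off by the Lebesgue differentiation theorem.
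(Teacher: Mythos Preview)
The paper does not supply a proof of this proposition at all; it is quoted verbatim as a citation from \cite[Section~2.1]{KSTr}. Your argument is correct and is essentially the standard one found in that reference: the semigroup law $I^{1-\alpha}_{0|t}I^{\alpha}_{0|t}=I^{1}_{0|t}$ via Fubini and the Beta integral, followed by Lebesgue differentiation, for \eqref{7}; and unwinding the definition $D^{m+\alpha}_{t|T}=(-1)^{m+1}\dfrac{d^{m+1}}{dt^{m+1}}I^{1-\alpha}_{t|T}$ together with a regularity check (your Taylor-remainder device is one clean way to do it) for \eqref{5}.
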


Given $T>0$, let us define the function $w:\quad [0,T] \to \R$ by the following formula:
\begin{equation}\label{w}
\displaystyle w(t)=\left(1-t/T\right)^{\beta} \quad\hbox{for all}\,\,\,0\leq t\leq T,
\end{equation}
where $\beta \gg1$ is big enough. Later on, we need the following properties concerning the function $w$.
\begin{lemma}\cite[Property~2.1, p.71]{KSTr}\label{lemma1}\\
Let $T>0$, $0<\alpha<1$ and $m\geq0$. For all $t\in[0,T]$, we have
\begin{equation}\label{6}
D_{t|T}^{m+\alpha}
w(t)=\frac{\Gamma(\beta+1)}{
\Gamma(\beta+1-m-\alpha)}T^{-(m+\alpha)}(1-t/T)^{\beta-\alpha-m}.
\end{equation}
\end{lemma}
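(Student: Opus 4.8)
The plan is to reduce the general order $m+\alpha$ to the base case $m=0$ via identity \eqref{5}, and to settle the base case by a direct computation from the definition of the right-sided fractional derivative. Since, for $\beta$ large enough, the function $w$ in \eqref{w} belongs to $AC^{m+1}[0,T]$, identity \eqref{5} gives
$$D^{m+\alpha}_{t|T}w(t)=(-1)^m\frac{d^m}{dt^m}\big(D^\alpha_{t|T}w(t)\big),$$
so it suffices to compute $D^\alpha_{t|T}w$ explicitly and then differentiate it $m$ times.

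For the base case I would start from $D^\alpha_{t|T}w=-\frac{d}{dt}I^{1-\alpha}_{t|T}w$ and first evaluate $I^{1-\alpha}_{t|T}w(t)=\frac{1}{\Gamma(1-\alpha)}\int_t^T(s-t)^{-\alpha}\big((T-s)/T\big)^{\beta}\,ds$. The change of variables $s=t+(T-t)\xi$ with $\xi\in[0,1]$ turns the integral into
$$\frac{(T-t)^{\beta-\alpha+1}}{\Gamma(1-\alpha)\,T^{\beta}}\int_0^1\xi^{-\alpha}(1-\xi)^{\beta}\,d\xi,$$
where the last integral is a Beta-function integral equal to $\Gamma(1-\alpha)\Gamma(\beta+1)/\Gamma(\beta+2-\alpha)$ (convergent because $\alpha<1$ and $\beta\gg1$). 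Writing $(T-t)^{\beta-\alpha+1}=T^{\beta-\alpha+1}(1-t/T)^{\beta-\alpha+1}$ and simplifying gives $I^{1-\alpha}_{t|T}w(t)=\frac{\Gamma(\beta+1)}{\Gamma(\beta+2-\alpha)}T^{1-\alpha}(1-t/T)^{\beta-\alpha+1}$. Differentiating in $t$ and using $(\beta+1-\alpha)\Gamma(\beta+1-\alpha)=\Gamma(\beta+2-\alpha)$ then yields
$$D^\alpha_{t|T}w(t)=\frac{\Gamma(\beta+1)}{\Gamma(\beta+1-\alpha)}\,T^{-\alpha}\,(1-t/T)^{\beta-\alpha},$$
which is exactly \eqref{6} for $m=0$.

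To finish, I would differentiate this identity $m$ times. Since $\frac{d^m}{dt^m}(1-t/T)^{\beta-\alpha}=(-1)^m T^{-m}(1-t/T)^{\beta-\alpha-m}\prod_{j=0}^{m-1}(\beta-\alpha-j)$ and $\prod_{j=0}^{m-1}(\beta-\alpha-j)=\Gamma(\beta+1-\alpha)/\Gamma(\beta+1-\alpha-m)$, one obtains
$$\frac{d^m}{dt^m}\big(D^\alpha_{t|T}w(t)\big)=(-1)^m\,\frac{\Gamma(\beta+1)}{\Gamma(\beta+1-m-\alpha)}\,T^{-(m+\alpha)}\,(1-t/T)^{\beta-\alpha-m}.$$
Multiplying by $(-1)^m$ as dictated by \eqref{5}, the two signs cancel and \eqref{6} follows for every $m\ge0$ (the case $m=0$ being the empty-product convention).

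The argument carries no genuine obstacle beyond careful bookkeeping with the Gamma function and Pochhammer-type products. The only steps needing a word of justification are that $\beta$ is taken large enough that $w\in AC^{m+1}[0,T]$ (so that \eqref{5} applies) and that $\beta+1-m-\alpha>0$ (so that the Beta-function integral converges and all Gamma factors appearing in \eqref{6} are finite); both are guaranteed by the standing hypothesis $\beta\gg1$ in \eqref{w}.
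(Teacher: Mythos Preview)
Your computation is correct. The paper itself provides no proof of this lemma: it is simply quoted from \cite[Property~2.1, p.~71]{KSTr}, so there is nothing to compare against. Your argument---evaluating $I^{1-\alpha}_{t|T}w$ via the substitution $s=t+(T-t)\xi$ and the Beta function, differentiating once to obtain $D^\alpha_{t|T}w$, and then invoking \eqref{5} together with $m$ further differentiations---is exactly the standard derivation and is carried out cleanly. The bookkeeping with the Gamma identities $(\beta+1-\alpha)\Gamma(\beta+1-\alpha)=\Gamma(\beta+2-\alpha)$ and $\prod_{j=0}^{m-1}(\beta-\alpha-j)=\Gamma(\beta+1-\alpha)/\Gamma(\beta+1-m-\alpha)$ is accurate, and your remark that $\beta\gg1$ ensures both $w\in AC^{m+1}[0,T]$ and the finiteness of all Gamma factors is the right justification.
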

\begin{lemma}\label{lemma2}
Let $T>0$, $0<\alpha<1$, $m\geq0$ and $p>1$. Then, we have
\begin{equation}
\int_0^T (w(t))^{-\frac{1}{p-1}}|D_{t|T}^{m+\alpha}
w(t)|^{\frac{p}{p-1}}\,dt=C\,T^{1-(m+\alpha)\frac{p}{p-1}}.
\end{equation}
\end{lemma}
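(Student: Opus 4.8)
The plan is to prove Lemma \ref{lemma2} by a direct computation, feeding the explicit formula of Lemma \ref{lemma1} into the integral and reducing everything to a single Beta-type integral in the variable $1-t/T$. First I would recall that, by definition \eqref{w}, we have $(w(t))^{-\frac{1}{p-1}} = (1-t/T)^{-\frac{\beta}{p-1}}$, while Lemma \ref{lemma1} gives
$$
|D_{t|T}^{m+\alpha} w(t)|^{\frac{p}{p-1}} = \Big(\frac{\Gamma(\beta+1)}{\Gamma(\beta+1-m-\alpha)}\Big)^{\frac{p}{p-1}} T^{-(m+\alpha)\frac{p}{p-1}} (1-t/T)^{(\beta-\alpha-m)\frac{p}{p-1}},
$$
provided $\beta$ is chosen large enough that $\beta+1-m-\alpha>0$, so that the Gamma factor is well defined and positive (hence the absolute value is harmless).

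Next I would multiply these two expressions. The power of $T$ pulls out of the integral as $T^{-(m+\alpha)\frac{p}{p-1}}$, and the powers of $(1-t/T)$ combine into the single exponent
$$
(\beta-\alpha-m)\tfrac{p}{p-1} - \tfrac{\beta}{p-1} = \frac{\beta(p-1) - (m+\alpha)p}{p-1} = \beta - (m+\alpha)\tfrac{p}{p-1}.
$$
So the integral reduces to a constant (the Gamma factor to the power $\frac{p}{p-1}$) times $T^{-(m+\alpha)\frac{p}{p-1}}$ times $\int_0^T (1-t/T)^{\beta - (m+\alpha)\frac{p}{p-1}}\,dt$.

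Then I would evaluate this last integral by the substitution $\tau = 1-t/T$, $d\tau = -dt/T$, which turns it into $T\int_0^1 \tau^{\beta - (m+\alpha)\frac{p}{p-1}}\,d\tau$. This is finite precisely because the assumption $\beta \gg 1$ guarantees $\beta - (m+\alpha)\frac{p}{p-1} + 1 > 0$; its value is $T\big(\beta - (m+\alpha)\frac{p}{p-1} + 1\big)^{-1}$. Collecting all constants into a single $C>0$ (which depends on $\beta,m,\alpha,p$ but not on $T$) and combining the two powers of $T$, namely $T^{-(m+\alpha)\frac{p}{p-1}}\cdot T = T^{1-(m+\alpha)\frac{p}{p-1}}$, yields the claimed identity. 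I do not expect any genuine obstacle here: the only point requiring care is that $\beta$ must be taken large enough to make both the Gamma quotient in Lemma \ref{lemma1} and the exponent $\beta - (m+\alpha)\frac{p}{p-1} + 1$ strictly positive, which is exactly the standing hypothesis $\beta \gg 1$ on the cut-off function $w$.
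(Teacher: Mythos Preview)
Your proof is correct and follows essentially the same route as the paper's: feed Lemma~\ref{lemma1} into the integrand, combine the powers of $(1-t/T)$ to the single exponent $\beta-(m+\alpha)\tfrac{p}{p-1}$, and reduce to a Beta-type integral via a linear substitution (you use $\tau=1-t/T$, the paper uses $s=t/T$, which is immaterial). The paper is slightly terser about the constants, but the argument is the same.
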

\begin{proof}  Using Lemma \ref{lemma1} we have 
\begin{eqnarray*}
\int_{0}^T (w(t))^{-\frac{1}{p-1}}|D_{t|T}^{m+\alpha}
w(t)|^{\frac{p}{p-1}}\,dt&=&C\,T^{-(m+\alpha)\frac{p}{p-1}}\int_{0}^T (w(t))^{-\frac{1}{p-1}}(w(t))^{\frac{p({\beta}-\alpha-m)}{(p-1)\beta}}\,dt\\
&=&C\,T^{-(m+\alpha)\frac{p}{p-1}}\int_{0}^T(1-t/T)^{\beta-(m+\alpha)\frac{ p}{p-1}}\,dt\\
&=&C\,T^{1-(m+\alpha)\frac{p}{p-1}}\int_{0}^1(1-s)^{\beta-(m+\alpha)\frac{ p}{p-1}}\,ds\\
&=&C\,T^{1-(m+\alpha)\frac{p}{p-1}},
\end{eqnarray*}
where we notice that since $\beta \gg1$ is big enough, it guarantees the integrability of the last integral.
\end{proof}

\bdn[\cite{Kwanicki,Silvestre}]
\fontshape{n}
\selectfont
Let $s \in (0,1)$. Let $X$ be a suitable set of functions defined on $\R^n$. Then, the fractional Laplacian $(-\Delta)^s$ in $\R^n$ is a non-local operator given by
$$ (-\Delta)^s: \,\,v \in X  \to (-\Delta)^s v(x):= C_{n,s}\,\, p.v.\int_{\R^n}\frac{v(x)- v(y)}{|x-y|^{n+2s}}dy, $$
as long as the right-hand exists, where $p.v.$ stands for Cauchy's principal value, $C_{n,s}:= \frac{4^s \Gamma(\frac{n}{2}+s)}{\pi^{\frac{n}{2}}\Gamma(-s)}$ is a normalization constant and $\Gamma$ denotes the Gamma function.
\label{def1}
\edn

\begin{lemma}\label{lemma4}
Let $\langle x\rangle:=(1+(|x|-1)^4)^{1/4}$ for all $x\in\mathbb{R}^n$. Let $s \in (0,1]$ and $\phi$ be a function defined by
\begin{equation}\label{testfunction}
\phi(x)=\left\{
\begin{array}{ll}
1&\hbox{if}\;\;|x|\leq1,\\
{}\\
\langle x\rangle^{-n-2s}&\hbox{if}\;\;|x|\geq1.
\end{array}
\right.
\end{equation}
Then, $\phi\in C^2(\mathbb{R}^n)$ and the following estimate holds:
\begin{equation}\label{10}
\left|(-\Delta)^s\phi(x)\right|\lesssim \phi(x) \quad\hbox{for all}\;x\in\mathbb{R}^n.
\end{equation}
\end{lemma}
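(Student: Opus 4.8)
The plan is to prove the two assertions separately: first the $C^2$ regularity of $\phi$, and then the pointwise bound (\ref{10}). Throughout I would use the abbreviation $\psi(r):=(1+(r-1)^4)^{-(n+2s)/4}$, so that $\phi(x)=\psi(|x|)$ for $|x|\ge1$.

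\emph{Regularity.} Away from the sphere $\{|x|=1\}$ there is nothing to check: $\phi$ is constant on $\{|x|<1\}$, and on $\{|x|>1\}$ it is a composition of smooth maps, namely $x\mapsto|x|$ (smooth off the origin), $r\mapsto(1+(r-1)^4)^{1/4}$ (smooth on $(0,\infty)$ since $1+(r-1)^4\ge1$), and $t\mapsto t^{-n-2s}$. To cross $\{|x|=1\}$ I would differentiate $\psi$ directly: $\psi(1)=1$, $\psi'(r)=-(n+2s)(r-1)^3(1+(r-1)^4)^{-(n+2s)/4-1}$, and $\psi''(r)=(r-1)^2 G(r)$ for some $G$ smooth near $r=1$; hence $\psi'(1)=\psi''(1)=0$. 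Since for $|x|>1$ one has $\nabla\phi(x)=\psi'(|x|)\,x/|x|$ and $D^2\phi(x)=\psi''(|x|)\,|x|^{-2}x\otimes x+\psi'(|x|)\big(|x|^{-1}I-|x|^{-3}x\otimes x\big)$, both of these tend to $0$ as $|x|\to1^{+}$ and thus match the vanishing derivatives of the constant value of $\phi$ on $\{|x|<1\}$; a standard matching argument then yields $\phi\in C^{2}(\mathbb{R}^n)$, with $\nabla\phi=D^2\phi=0$ on $\{|x|=1\}$ — this is precisely why the fourth power is built into $\langle x\rangle$. Along the way I would record, from $\langle x\rangle\approx 1+|x|$ and the explicit formulas, the crude global bounds $\phi(x)\approx(1+|x|)^{-n-2s}$ and $|\nabla\phi(x)|+|D^2\phi(x)|\lesssim\phi(x)$ on $\mathbb{R}^n$, which are used repeatedly below.

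\emph{The estimate: preliminary reductions.} For $s=1$ the operator is $-\Delta$: then $\Delta\phi\equiv0$ on $\{|x|<1\}$, $-\Delta\phi(x)=-\psi''(|x|)-\tfrac{n-1}{|x|}\psi'(|x|)$ is bounded on the compact annulus $\{1\le|x|\le4\}$, and the elementary large-$r$ bounds $|\psi'(r)|\lesssim r^{-n-3}$, $|\psi''(r)|\lesssim r^{-n-4}$ give $|\Delta\phi(x)|\lesssim|x|^{-n-4}\lesssim|x|^{-n-2}\approx\phi(x)$ for $|x|\ge4$; hence (\ref{10}). For $s\in(0,1)$: since $\phi\in C^2(\mathbb{R}^n)\cap L^\infty(\mathbb{R}^n)$, the principal value defining $(-\Delta)^s\phi(x)$ converges at every $x$, and a routine split into $|y-x|\le1$ and $|y-x|\ge1$ gives the uniform bound $|(-\Delta)^s\phi(x)|\le C_{n,s}\big(\|D^2\phi\|_{L^\infty}\int_{|z|\le1}|z|^{2-n-2s}dz+2\|\phi\|_{L^\infty}\int_{|z|\ge1}|z|^{-n-2s}dz\big)=:M<\infty$, both integrals being finite exactly because $0<s<1$. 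Since $\phi$ is radially non-increasing, $\phi\ge c_0>0$ on $\{|x|\le4\}$, so $|(-\Delta)^s\phi(x)|\le M\le(M/c_0)\phi(x)$ there; it remains to treat $|x|\ge4$.

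\emph{The estimate for $|x|\ge4$.} Here $\phi(x)\approx|x|^{-n-2s}$, and I would split $(-\Delta)^s\phi(x)$ into a near part $p.v.\!\int_{|y-x|\le1}$ and a far part $\int_{|y-x|\ge1}$ of $\tfrac{\phi(x)-\phi(y)}{|x-y|^{n+2s}}dy$. In the near part, symmetrizing and using $|2\phi(x)-\phi(x+z)-\phi(x-z)|\le|z|^2\sup_{|w-x|\le1}|D^2\phi(w)|$ together with $\{|w-x|\le1\}\subset\{|w|\ge3\}$ — on which $|D^2\phi|\lesssim\phi$ and $\phi(w)\approx\phi(x)$ — bounds it by $\phi(x)\int_{|z|\le1}|z|^{2-n-2s}dz\lesssim\phi(x)$. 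In the far part, the $\phi(x)$-term contributes $\phi(x)\int_{|z|\ge1}|z|^{-n-2s}dz\lesssim\phi(x)$, while the $\phi(y)$-term I would split according to $|y|\le|x|/2$ — where $|x-y|\ge|x|/2$, so $|x-y|^{-n-2s}\lesssim|x|^{-n-2s}\approx\phi(x)$ and $\int\phi\,dy\le\|\phi\|_{L^1}<\infty$ — and $|y|\ge|x|/2$ — where $\phi(y)\lesssim(1+|y|)^{-n-2s}\lesssim|x|^{-n-2s}\approx\phi(x)$ and $\int_{|x-y|\ge1}|x-y|^{-n-2s}dy<\infty$. Adding the pieces gives $|(-\Delta)^s\phi(x)|\lesssim\phi(x)$ for $|x|\ge4$, which completes the proof. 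The one delicate point is the near-field estimate for $|x|\ge4$: one must exploit the genuine $C^2$ regularity of $\phi$ via a second-order Taylor expansion (mere continuity does not suffice once $2s\ge1$), and one must keep track that $\nabla\phi$ and $D^2\phi$ decay at the same rate as $\phi$ so that the near-field integral is dominated by $\phi(x)$ rather than by a slower-decaying quantity — and the compatibility of that decay with global $C^2$ smoothness is exactly what the vanishing of $\psi'(1),\psi''(1)$ provides. Everything else is bookkeeping: $\phi\in L^1(\mathbb{R}^n)$ and the convergence of $\int_{|z|\le1}|z|^{2-n-2s}dz$, $\int_{|z|\ge1}|z|^{-n-2s}dz$ for $s\in(0,1)$.
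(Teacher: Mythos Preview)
Your proof is correct, and proceeds along a somewhat different decomposition from the paper's. The paper also first verifies $C^2$ regularity by explicit computation and then, working exclusively with the symmetric second-difference form of $(-\Delta)^s$, splits into $|x|\le 2$ and $|x|\ge 2$; in the latter region it uses a three-annulus decomposition in the integration variable, namely $|y|\ge 2|x|$, $\tfrac12|x|\le|y|\le 2|x|$, and $|y|\le\tfrac12|x|$, treating each piece by monotonicity of $\phi$, the comparability $|y|\approx|x|$, and a second-order Taylor expansion, respectively. Your route instead splits (for large $|x|$) into a near part $|y-x|\le 1$ handled in one stroke via the global pointwise bound $|D^2\phi|\lesssim\phi$ that you recorded up front, and a far part $|y-x|\ge 1$ subdivided by $|y|\lessgtr |x|/2$, which reduces to $\|\phi\|_{L^1}<\infty$ and the convergence of $\int_{|z|\ge 1}|z|^{-n-2s}\,dz$. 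The paper's annular split yields slightly sharper intermediate decay (e.g.\ $\langle x\rangle^{-n-4s}$ for two of the three pieces), which is not needed here; your split is shorter and more modular, and you also treat $s=1$ explicitly via $-\Delta$, whereas the paper's argument, as written, is carried out only through the singular-integral representation valid for $s\in(0,1)$.
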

\begin{proof}
Let us denote $r:=|x|$. As $\phi$ is a radial function, we have 
$$\nabla\phi(x)=\frac{x}{r}\phi'(r)=\left\{
\begin{array}{ll}
0&\hbox{if}\;\;r\leq1,\\
{}\\
-(n+2s)\frac{x}{r}(r-1)^3\langle x\rangle^{-n-2s-4}&\hbox{if}\;\;r\geq1,
\end{array}
\right.
$$
and
\begin{equation}\label{delta}
\Delta\phi(x)=\phi''(r)+\frac{n-1}{r}\phi'(r)=\left\{
\begin{array}{ll}
0&\hbox{if}\;\;r\leq1,\\
{}\\
-3(n+2s)(r-1)^2\langle x\rangle^{-n-2s-4}\\
\quad +(n+2s)(n+2s+4)(r-1)^6\langle x\rangle^{-n-2s-8}\\
\quad -(n+2s)\frac{n-1}{r}(r-1)^3\langle x\rangle^{-n-2s-4}&\hbox{if}\;\;r\geq1.
\end{array}
\right.
\end{equation}
We can check easily that $\phi\in C^2(\mathbb{R}^n)$. Moreover, $\|\phi\|_{L^\infty(\mathbb{R}^n)},\|\partial_x^2\phi\|_{L^\infty(\mathbb{R}^n)}\leq C$ which allows us to remove the principal value of the integral at the origin and conclude that
$$(-\Delta)^s \phi(x)= -\frac{C_{n,s}}{2} \int_{\mathbb{R}^n}\frac{\phi(x+y)+ \phi(x-y)- 2\phi(x)}{|y|^{n+ 2s}}\,dy.$$
To prove the desired estimate, we have to distinguish two cases.\\

\noindent {\bf Case 1: $|x|\leq 2$.} We divide the above integral into two parts as follows:
\begin{eqnarray}\label{E1}
|(-\Delta)^s \phi(x)|&\lesssim & \int_{|y|\leq1}\frac{|\phi(x+y)+ \phi(x-y)- 2\phi(x)|}{|y|^{n+ 2s}}\,dy+ \int_{|y|\geq1}\frac{|\phi(x+y)+ \phi(x-y)- 2\phi(x)|}{|y|^{n+ 2s}}\,dy\nonumber\\
&\lesssim & \|\partial_x^2\phi\|_{L^\infty(\mathbb{R}^n)}\int_{|y|\leq1}\frac{1}{|y|^{n+ 2s-2}}dy+ \|\phi\|_{L^\infty(\mathbb{R}^n)}\int_{|y|\geq1}\frac{1}{|y|^{n+ 2s}}\,dy\nonumber\\
&\lesssim & 1,
\end{eqnarray}
where we have used the boundedness of the above two integrals.\\

\noindent {\bf Case 2: $|x|\geq 2$.} In this case, we re-write the fractional laplacian as follows:
\begin{eqnarray}\label{E2}
(-\Delta)^s \phi(x)&= & -\frac{C_{n,s}}{2} \int_{|y|\geq 2|x|}\frac{\phi(x+y)+ \phi(x-y)- 2\phi(x)}{|y|^{n+ 2s}}\,dy\nonumber\\
&{}&-\frac{C_{n,s}}{2} \int_{\frac{1}{2}|x|\leq |y|\leq 2|x|}\frac{\phi(x+y)+ \phi(x-y)- 2\phi(x)}{|y|^{n+ 2s}}\,dy\nonumber\\
&{} &-\frac{C_{n,s}}{2} \int_{|y|\leq\frac{1}{2}|x|}\frac{\phi(x+y)+ \phi(x-y)- 2\phi(x)}{|y|^{n+ 2s}}\,dy\nonumber\\
&=:& I_1+I_2+I_3.
\end{eqnarray}
We start to estimate $I_1$. We notice that when $|y|\geq 2|x|$, we have $|x\pm y|\geq |y|-|x|\geq|x|\geq 2$. Using the monotonicity of $\phi$, we obtain $\phi(x\pm y)\leq \phi(x)=\langle x\rangle^{-n-2s}$. Therefore, it holds
\begin{equation}\label{I1}
|I_1|\lesssim 4\phi(x) \int_{|y|\geq 2|x|}\frac{1}{|y|^{n+ 2s}}dy\lesssim \langle x\rangle^{-n-2s} \int_{|y|\geq 2|x|}\frac{1}{|y|^{1+ 2s}}d|y|\lesssim \langle x\rangle^{-n-2s}|x|^{-2s}\lesssim \langle x\rangle^{-n-4s},
\end{equation}
where we have used the fact that $\langle x\rangle\approx |x|-1\approx |x|$ when $|x|\geq 2$.\\
For the estimation of $I_2$, it is clear that $|y|\approx |x|$ inside the integral and
$$\left\{y\in\mathbb{R}^n:\;\;\frac{1}{2}|x|\leq |y|\leq 2|x|\right\}\subseteq \big\{y\in\mathbb{R}^n:\;\;|x\pm y|\leq 3|x|\big\}.$$
Thus, it follows that
\begin{eqnarray}\label{I2}
|I_2|&\lesssim & |x|^{-n-2s}\left( \int_{|x+y|\leq 3|x|}\phi(x+y)\,dy+ \int_{|x-y|\leq 3|x|}\phi(x-y)\,dy+\phi(x) \int_{\frac{1}{2}|x|\leq |y|\leq 2|x|}1\,dy\right)\nonumber\\
&\lesssim & |x|^{-n-2s}\left( \int_{|x+y|\leq 3|x|}\phi(x+y)\,dy+  \langle x\rangle^{-n-2s}|x|^n\right)\nonumber\\
&\lesssim & |x|^{-n-2s}\left(1+  \langle x\rangle^{-2s}\right)\nonumber\\
&\lesssim & |x|^{-n-2s},
\end{eqnarray}
where we have used the following relation: 
\begin{eqnarray*}
\int_{|x-y|\leq 3|x|}\phi(x-y)\,dy
&=& \int_{|x+y|\leq 3|x|}\phi(x+y)\,dy\\
&=& \int_{2\leq |x+y|\leq 3|x|}\phi(x+y)\,dy+ \int_{|x+y|\leq 2}\phi(x+y)\,dy\\
&\lesssim&  \int_{2}^{3|x|}(1+(r-1)^4)^{-(n+2s)/4}r^{n-1}\,dr+   \int_{0}^{2}r^{n-1}\,dr\\
&\lesssim&  \int_{2}^{3|x|}r^{-2s-1}\,dr+   \int_{0}^{2}r^{n-1}\,dr\\
&\lesssim& 1.
\end{eqnarray*}
We arrive to estimate the third integral $I_3$. Using the second order Taylor expansion for $\phi$, we obtain
\begin{eqnarray}\label{I_3}
|I_3|&\lesssim & \int_{|y|\leq\frac{1}{2}|x|}\frac{|\phi(x+y)+ \phi(x-y)- 2\phi(x)|}{|y|^{n+ 2s}}\,dy\nonumber\\
&\lesssim & \int_{|y|\leq\frac{1}{2}|x|}\,\max_{\theta\in[0,1]}|\partial^2_x\phi(x\pm\theta y)|\frac{1}{|y|^{n+ 2s-2}}\,dy.
\end{eqnarray}
On the other hand, we have the following estimate for $\theta\in[0,1]$ and $r=|x\pm\theta y|$:
\begin{eqnarray*}
|\partial^2_x\phi(x\pm\theta y)|&\leq&\left\{
\begin{array}{ll}
0&\hbox{if}\;\;r\leq1,\\
{}\\
C(r-1)^2(1+(r-1)^4)^{-(n+2s+4)/4}\\
\quad +C(r-1)^6(1+(r-1)^4)^{-(n+2s+8)/4}\\
\quad +C\frac{(r-1)^3}{r}(1+(r-1)^4)^{-(n+2s+4)/4}&\hbox{if}\;\;r\geq1,
\end{array}
\right.\\
&\leq&\left\{
\begin{array}{ll}
0&\hbox{if}\;\;r\leq1,\\
{}\\
C(1+(r-1)^4)^{-(n+2s+2)/4}&\hbox{if}\;\;r\geq1.\\
\end{array}
\right.\\
\end{eqnarray*}
As 
$$|x\pm\theta y|\geq|x|-\theta|y|\geq |x|-|x|/2=|x|/2\geq 1,$$ 
we deduce
$$|\partial^2_x\phi(x\pm\theta y)|\leq C  \langle x\pm\theta y\rangle^{-n-2s-2}.$$
If $|x\pm\theta y|\geq 2$, then $ \langle x\pm\theta y\rangle\approx |x\pm\theta y|\geq |x|/2\gtrsim  \langle x\rangle$, which implies 
$$ \langle x\pm\theta y\rangle^{-n-2s-2}\lesssim  \langle x\rangle^{-n-2s-2}.$$
If $|x\pm\theta y|\leq 2$, then $|x|/2\leq |x\pm\theta y|\leq 2$, which implies $2\leq|x|\leq 4$. Therefore, we may arrive at $ \{\langle x\pm\theta y\rangle\approx \langle x\rangle \approx 1$ and
$$ \langle x\pm\theta y\rangle^{-n-2s-2}\approx  \langle x\rangle^{-n-2s-2}.$$
This yields
$$|\partial^2_x\phi(x\pm\theta y)|\leq C  \langle x\rangle^{-n-2s-2} \quad\hbox{for all}\;\theta\in[0,1].$$
By (\ref{I_3}), we conclude that
\begin{eqnarray}\label{I3}
|I_3|&\lesssim & \langle x\rangle^{-n-2s-2} \int_{|y|\leq\frac{1}{2}|x|}\frac{1}{|y|^{n+ 2s-2}}\,dy\nonumber\\
&\lesssim & \langle x\rangle^{-n-2s-2} \int_{0}^{|x|/2} |y|^{-2s+1}\,d|y|\nonumber\\
&\lesssim & \langle x\rangle^{-n-2s-2} |x|^{-2s+2}\nonumber\\
&\lesssim & \langle x\rangle^{-n-4s} .
\end{eqnarray}
Combining (\ref{I1}), (\ref{I2}) and (\ref{I3}) we conclude from (\ref{E2}) that
\begin{eqnarray}\label{E3}
|(-\Delta)^s \phi(x)|\lesssim \langle x\rangle^{-n-2s} \quad\hbox{ for all }\;|x|\geq 2.
\end{eqnarray}
Finally, both (\ref{E1}) and (\ref{E3}) imply
$$|(-\Delta)^s \phi(x)|\lesssim \langle x\rangle^{-n-2s}\le \phi(x) \quad\hbox{for all}\;x\in\mathbb{R}^n,$$
which we wanted to prove.
\end{proof}

\begin{lemma}\label{lemma7}\cite[Lemma~2.4]{DaoReissig}
Let $s \in (0,1]$. Let $\psi$ be a smooth function satisfying $\partial_x^2\psi\in L^\infty(\mathbb{R}^n)$. For any $R>0$, let $\psi_R$ be a function defined by
$$ \psi_R(x):= \psi(x/R) \quad \text{ for all } x \in \R^n.$$
Then, $(-\Delta)^s \psi_R$ satisfies the following scaling properties:
$$(-\Delta)^s \psi_R(x)= R^{-2s}((-\Delta)^s\psi)(x/R) \quad \text{ for all } x \in \R^n. $$
\end{lemma}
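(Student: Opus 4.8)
The plan is to obtain the identity by a single dilation change of variables in the integral defining the fractional Laplacian. Under the standing hypothesis (and, as with the test functions actually used in this paper — cf. Lemma~\ref{lemma4} — one may also take $\psi\in L^\infty(\mathbb{R}^n)$), the argument from the proof of Lemma~\ref{lemma4} shows that for every fixed $x\in\mathbb{R}^n$ the principal value at the origin may be dropped and $(-\Delta)^s\psi_R$ admits the symmetric representation
$$(-\Delta)^s\psi_R(x)=-\frac{C_{n,s}}{2}\int_{\mathbb{R}^n}\frac{\psi_R(x+z)+\psi_R(x-z)-2\psi_R(x)}{|z|^{n+2s}}\,dz,$$
the integrand being absolutely integrable since it is $O(|z|^{2-n-2s})$ near $z=0$ (by $\partial_x^2\psi\in L^\infty$) and $O(|z|^{-n-2s})$ for large $|z|$.

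First I would substitute the definition $\psi_R(y)=\psi(y/R)$ into the right-hand side, so that the numerator becomes $\psi\big((x+z)/R\big)+\psi\big((x-z)/R\big)-2\psi(x/R)$. Then I would perform the change of variables $z=Ry$, under which $dz=R^{n}\,dy$ and $|z|^{n+2s}=R^{n+2s}\,|y|^{n+2s}$; the Jacobian $R^{n}$ cancels $n$ of the $n+2s$ powers in the denominator, leaving the overall factor $R^{-2s}$ in front of
$$-\frac{C_{n,s}}{2}\int_{\mathbb{R}^n}\frac{\psi(x/R+y)+\psi(x/R-y)-2\psi(x/R)}{|y|^{n+2s}}\,dy=\big((-\Delta)^s\psi\big)(x/R),$$
which yields $(-\Delta)^s\psi_R(x)=R^{-2s}\big((-\Delta)^s\psi\big)(x/R)$, the claimed scaling property.

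No genuine obstacle is expected here: the only point deserving a word of care is the a priori justification that both integrals converge absolutely and that the dilation is a legitimate change of variables, and this is immediate from $\partial_x^2\psi\in L^\infty$ exactly as in the proof of Lemma~\ref{lemma4}. Equivalently, one could run the same computation directly on the principal-value form $C_{n,s}\,\mathrm{p.v.}\int_{\mathbb{R}^n}\big(\psi_R(x)-\psi_R(y)\big)|x-y|^{-n-2s}\,dy$ using $y=Rw$ and $|x-y|=R\,|x/R-w|$, arriving at the identical conclusion.
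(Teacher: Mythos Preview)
The paper does not supply its own proof of this lemma; it simply cites \cite[Lemma~2.4]{DaoReissig}. Your direct change-of-variables argument in the symmetric singular-integral representation is correct and is exactly the standard proof of this scaling identity. The only point worth noting is that the bound $O(|z|^{-n-2s})$ for large $|z|$ requires $\psi\in L^\infty(\mathbb{R}^n)$ (not merely $\partial_x^2\psi\in L^\infty$), which you rightly flag; this additional hypothesis is satisfied by the test function $\phi$ of Lemma~\ref{lemma4}, which is the only instance where Lemma~\ref{lemma7} is invoked in the paper.
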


\begin{lemma}\label{lemma6} Let $s \in (0,1]$. Let $R>0$ and $p>1$. Then, the following estimate holds
$$\int_{\mathbb{R}^n}(\phi_R(x))^{-\frac{1}{p-1}}\,\big|(-\Delta)^s\phi_R(x)\big|^{\frac{p}{p-1}}\, dx\lesssim R^{-\frac{2sp}{p-1}+n},$$
where $\phi_R(x):= \phi({x}/{R})$ and $\phi$ is given in (\ref{testfunction}).
\end{lemma}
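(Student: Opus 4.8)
The plan is to reduce everything to a single fixed (i.e. $R$-independent) integral by exploiting the scaling identity of Lemma \ref{lemma7} together with the pointwise bound of Lemma \ref{lemma4}, and then to check that this fixed integral converges.

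First I would apply Lemma \ref{lemma7} with $\psi = \phi$ (note $\phi \in C^2(\R^n)$ with $\partial_x^2 \phi \in L^\infty$ by Lemma \ref{lemma4}, so the hypothesis is satisfied) to write
$$
(-\Delta)^s \phi_R(x) = R^{-2s}\big((-\Delta)^s \phi\big)(x/R) \quad \text{for all } x \in \R^n .
$$
Since also $\phi_R(x) = \phi(x/R)$ and $\phi$ is strictly positive, the integrand on the left-hand side equals
$$
\big(\phi(x/R)\big)^{-\frac{1}{p-1}}\, R^{-\frac{2sp}{p-1}}\,\big|\big((-\Delta)^s\phi\big)(x/R)\big|^{\frac{p}{p-1}} .
$$
Performing the change of variables $y = x/R$ (so $dx = R^n\, dy$) then yields
$$
\int_{\R^n}(\phi_R(x))^{-\frac{1}{p-1}}\,\big|(-\Delta)^s\phi_R(x)\big|^{\frac{p}{p-1}}\, dx
= R^{-\frac{2sp}{p-1}+n}\int_{\R^n}(\phi(y))^{-\frac{1}{p-1}}\,\big|(-\Delta)^s\phi(y)\big|^{\frac{p}{p-1}}\, dy .
$$

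Next I would bound the remaining $R$-independent integral. By Lemma \ref{lemma4} we have $|(-\Delta)^s\phi(y)| \lesssim \phi(y)$ for all $y$, hence, using $-\tfrac{1}{p-1} + \tfrac{p}{p-1} = 1$,
$$
(\phi(y))^{-\frac{1}{p-1}}\,\big|(-\Delta)^s\phi(y)\big|^{\frac{p}{p-1}} \lesssim (\phi(y))^{-\frac{1}{p-1}+\frac{p}{p-1}} = \phi(y) .
$$
It therefore suffices to observe that $\phi \in L^1(\R^n)$: indeed $\phi$ is bounded (by $1$) on $\{|y|\le 1\}$, while for $|y| \ge 1$ one has $\phi(y) = \langle y\rangle^{-n-2s} \approx |y|^{-n-2s}$, which is integrable at infinity because $n + 2s > n$. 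Consequently $\int_{\R^n}(\phi(y))^{-\frac{1}{p-1}}|(-\Delta)^s\phi(y)|^{\frac{p}{p-1}}\, dy \le C$, and combining with the displayed identity gives the claimed estimate $\lesssim R^{-\frac{2sp}{p-1}+n}$.

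The argument is essentially routine; the only points requiring a little care are the bookkeeping of the exponents in the change of variables and the verification that $\phi$, rather than merely $|(-\Delta)^s\phi|$, controls the integrand — which is exactly what Lemma \ref{lemma4} provides — so there is no real obstacle here.
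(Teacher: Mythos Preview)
Your proof is correct and follows essentially the same approach as the paper: scaling via Lemma~\ref{lemma7}, the pointwise bound from Lemma~\ref{lemma4}, and a change of variables reducing to $\int_{\R^n}\phi\,dy<\infty$. The only cosmetic difference is that the paper treats the endpoint $s=1$ separately by invoking the explicit computation of $\Delta\phi$ (from~\eqref{delta}) rather than citing Lemma~\ref{lemma4}; since Lemma~\ref{lemma4} is stated for $s\in(0,1]$, your uniform treatment is justified and in fact slightly cleaner.
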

\begin{proof} If $0<s<1$, then using the change of variables $\tilde{x}=x/R$ and Lemma \ref{lemma7} we have $(-\Delta)^s\phi_R(x)=R^{-2s}(-\Delta)^s\phi(\tilde{x})$. Therefore, by Lemma \ref{lemma4} we conclude that
$$\int_{\mathbb{R}^n}(\phi_R(x))^{-\frac{1}{p-1}}\,\big|(-\Delta)^s \phi_R(x)\big|^{\frac{p}{p-1}}\, dx\lesssim R^{-\frac{2sp}{p-1}+n}\int_{\mathbb{R}^n}\phi(\tilde{x})\, d \tilde{x}\lesssim R^{-\frac{2sp}{p-1}+n}.$$
For the case of $s=1$, we may repeat the same calculation as (\ref{delta}) in Lemma \ref{lemma4} to conclude
$$\big|\Delta \phi(\tilde{x})\big|\lesssim  \langle \tilde{x}\rangle^{-n-2s-2},$$
which follows immediately the desired estimate by the change of variables.
\end{proof}


\section{Proof of main result} \label{Sec.Proof}
Before starting our proof, we define weak solutions for (\ref{1}).
\begin{dn}
\textcolor{red}{Let $p>1$.} Let $T>0$ and $(u_0,u_1)\in H^\sigma(\mathbb{R}^n)\times L^2(\mathbb{R}^n)$. A function $u$ is said to be a weak solution to \eqref{1} on $[0,T)$, if 
$$ u\in L^p((0,T),L^{2p}(\mathbb{R}^n))\cap  L^1((0,T),L^2(\mathbb{R}^n))
$$
 and the following formuation
 	\begin{align*}
&\Gamma(\alpha)\int_0^T\int_{\mathbb{R}^n}I^{\alpha}_{0|t}(|u(t,x)|^p)\varphi(t,x)\,dx\,dt\\
&\quad +\int_{\mathbb{R}^n}u_1(x)\varphi(0,x)\,dx-\int_{\mathbb{R}^n}u_0(x)\varphi_t(0,x)\,dx+\mu \int_{\mathbb{R}^n}(-\Delta)^{\sigma/2}u_0(x)\varphi(0,x)\,dx \\
&\quad =\int_0^T\int_{\mathbb{R}^n}u(t,x)\varphi_{tt}(t,x)\,dx\,dt- \int_0^T\int_{\mathbb{R}^n}u(t,x)\,\Delta\varphi(t,x) \,dx\,dt \\
&\qquad - \mu\int_0^T\int_{\mathbb{R}^n}u(t,x)\,(-\Delta)^{\sigma/2}\varphi_t(t,x)\,dx\,dt
\end{align*}
	holds for any test function$\varphi\in C([0,T];H^2(\mathbb{R}^n)) \cap C^1([0,T];H^\sigma(\mathbb{R}^n)) \cap C^2([0,T];L^2(\mathbb{R}^n))$ such that $\varphi(T,x)=0$ and $\varphi_t(T,x)=0$ for all $x\in \R^n$.
\par We say that $u$ is a global weak solution to \eqref{1} if $u$ is a weak solution to \eqref{1} on $[0,T)$ for any $T>0$.
\end{dn}
\begin{proof}[Proof of Theorem \ref{blow-up}]
The proof is based on a contradiction. Suppose that $u$ is a global weak solution to (\ref{1}), then $u$ satisfies
\begin{align}
&\Gamma(\alpha)\int_0^T\int_{\mathbb{R}^n}I^{\alpha}_{0|t}(|u(t,x)|^p)\varphi(t,x)\,dx\,dt  \nonumber \\
&\quad +\int_{\mathbb{R}^n}u_1(x)\varphi(0,x)\,dx-\int_{\mathbb{R}^n}u_0(x)\varphi_t(0,x)\,dx+\mu \int_{\mathbb{R}^n}(-\Delta)^{\sigma/2}u_0(x)\varphi(0,x)\,dx \nonumber \\
&\quad =\int_0^T\int_{\mathbb{R}^n}u(t,x)\varphi_{tt}(t,x)\,dx\,dt- \int_0^T\int_{\mathbb{R}^n}u(t,x)\,\Delta\varphi(t,x) \,dx\,dt  \nonumber \\
&\qquad  - \mu\int_0^T\int_{\mathbb{R}^n}u(t,x)\,(-\Delta)^{\sigma/2}\varphi_t(t,x)\,dx\,dt \label{3}
\end{align}
where $\alpha:=1-\gamma\in(0,1)$, for all test function $\varphi$ such that $\varphi(T,\cdotp)=\varphi_t(T,\cdotp)=0$ for all $T\gg1$. \\
Let $R$ and $T$ be large parameters in $(0,\ity)$. We define the following auxiliary functions:
$$ \phi_R(x):=\phi(x/R) \quad \text{ and }\quad \tilde{\varphi}(t,x):= \phi_R(x)\,w(t), $$
where $\phi(x)$ and $w(t)$ are defined in Section \ref{Sec.Pre} with $s=\sigma/2$. Then, we define the test function
$$\varphi(t,x):=D^\alpha_{t|T}\left(\tilde{\varphi}(t,x)\right)= \phi_R(x) D^{\alpha}_{t|T}\left(w(t)\right).$$
From (\ref{3}), using (\ref{5}) and (\ref{6}) we have
\begin{align}
&\Gamma(\alpha)\int_0^T\int_{\mathbb{R}^n}I^{\alpha}_{0|t}(|u(t,x)|^p)D^\alpha_{t|T}\left(\tilde{\varphi}(t,x)\right)\,dx\,dt\nonumber\\
&\quad +C\,T^{-\alpha}\int_{\mathbb{R}^n}\big(u_1(x)+ \mu(-\Delta)^{\sigma/2}u_0(x)\big)\phi_R(x) \,dx-C\,T^{-1-\alpha}\int_{\mathbb{R}^n}u_0(x)\phi_R(x)\,dx\nonumber\\
&\quad =\int_0^T\int_{\mathbb{R}^n}u(t,x)\,\phi_R(x)\,D^{2+\alpha}_{t|T}\left(w(t)\right)\,dx\,dt- \int_0^T\int_{\mathbb{R}^n}u(t,x)D^{\alpha}_{t|T}\left(w(t)\right)\Delta \phi_R(x)\,dx\,dt\nonumber\\
&\qquad +\mu\int_0^T\int_{\mathbb{R}^n}u(t,x)D^{1+\alpha}_{t|T}\left(w(t)\right)(-\Delta)^{\sigma/2}(\phi_R(x))\,dx\,dt\label{4}.
\end{align}
 Using (\ref{IP}) and then (\ref{7}) we arrive at 
 \begin{align}
&I_R+C\,T^{-\alpha}\int_{\mathbb{R}^n}\big(u_1(x)+\mu(-\Delta)^{\sigma/2}u_0(x)\big)\phi_R(x) \,dx-C\,T^{-1-\alpha}\int_{\mathbb{R}^n}u_0(x)\phi_R(x)\,dx\nonumber\\ 
&\quad =C\int_0^T\int_{\mathbb{R}^n}u(t,x)\,\phi_R(x)\, D^{2+\alpha}_{t|T}\left(w(t)\right)\,dx\,dt - C\int_0^T\int_{|x|\geq R}u(t,x)D^{\alpha}_{t|T}\left(w(t)\right)\Delta \phi_R(x)\,dx\,dt\nonumber\\
&\qquad +C\int_0^T\int_{\mathbb{R}^n}u(t,x)D^{1+\alpha}_{t|T}\left(w(t)\right)(-\Delta)^{\sigma/2}(\phi_R(x))\,dx\,dt\nonumber\\
&\quad =:J_1+J_2+J_3\label{8},
\end{align}
where
$$I_R:=\int_0^T\int_{\mathbb{R}^n}|u(t,x)|^p\,\tilde{\varphi}(t,x)\,dx\,dt.$$
After applying H\"{o}lder's inequality with $\frac{1}{p}+\frac{1}{p'}=1$, where $p'$ is the conjugate of $p$, we can proceed the estimate for $J_1$ as follows:
\begin{align*}
|J_1| &\le C\,\int_0^T\int_{\mathbb{R}^n}|u(t,x)|\, \phi_R(x)\,\big|D^{2+\alpha}_{t|T}\left(w(t)\right)\big|\,dx\,dt \\
&=C\,\int_0^T\int_{\mathbb{R}^n}|u(t,x)|\, (\tilde{\varphi}(t,x))^{\frac{1}{p}}(\tilde{\varphi}(t,x))^{-\frac{1}{p}}\,\varphi_R(x)\,\big|D^{2+\alpha}_{t|T}\left(w(t)\right)\big|\, dx\,dt \\
&\lesssim I_{R}^{\frac{1}{p}} \Big(\int_0^T\int_{\mathbb{R}^n}\phi_R(x)\,(w(t))^{-\frac{p'}{p}} \,\big|D^{2+\alpha}_{t|T}\left(w(t)\right)\big|^{p'}\, dx\,dt\Big)^{\frac{1}{p'}}.
\end{align*}
By the change of variables $\tilde{t}:= t/T$ and $\tilde{x}:= x/R$, using Lemma \ref{lemma2} we get
\begin{equation} \label{10}
|J_1| \lesssim I_{R}^{\frac{1}{p}}\,R^{\frac{n}{p'}}\,T^{\frac{1}{p'}-2-\alpha} \Big(\int_{\mathbb{R}^n} \langle \tilde{x}\rangle^{-n-\sigma}\,d\tilde{x}\Big)^{\frac{1}{p'}}\lesssim I_{R}^{\frac{1}{p}}\,R^{\frac{n}{p'}}\,T^{\frac{1}{p'}-2-\alpha}.
\end{equation}
Similarly, applying H\"{o}lder's inequality again and  using Lemma \ref{lemma2} and Lemma \ref{lemma6} (with $s=1$), we have
\begin{align}\label{11}
|J_2|& \lesssim \tilde{I}_{R}^{\frac{1}{p}} \Big(\int_0^T\int_{|x|\geq R}(\phi_R(x))^{-\frac{p'}{p}}\,(w(t))^{-\frac{p'}{p}} \,\big|D^{\alpha}_{t|T}\left(w(t)\right)\big|^{p'}\,\big|\Delta \phi_R(x)\big|^{p'}\, dx\,dt\Big)^{\frac{1}{p'}}\nonumber\\
& \lesssim \tilde{I}_{R}^{\frac{1}{p}} \Big(\int_0^T\,(w(t))^{-\frac{p'}{p}} \,\big|D^{\alpha}_{t|T}\left(w(t)\right)\big|^{p'}\,dt\Big)^{\frac{1}{p'}}\Big(\int_{|x|\geq R}(\phi_R(x))^{-\frac{p'}{p}}\,\big|\Delta \phi_R(x)\big|^{p'}\, dx\Big)^{\frac{1}{p'}}\nonumber\\
& \lesssim \tilde{I}_{R}^{\frac{1}{p}}\,T^{\frac{1}{p'}-\alpha}\Big(\int_{\R^n}(\phi_R(x))^{-\frac{p'}{p}}\,\big|\Delta \phi_R(x)\big|^{p'}\, dx\Big)^{\frac{1}{p'}} \nonumber \\
& \lesssim \tilde{I}_{R}^{\frac{1}{p}}\,T^{\frac{1}{p'}-\alpha}R^{\frac{n}{p'}-2},
\end{align}
where 
$$\tilde{I}_R:=\int_0^T\int_{|x|\geq R}|u(t,x)|^p\,\tilde{\varphi}(t,x)\,dx\,dt.$$
Moreover, Lemma \ref{lemma2} and Lemma \ref{lemma6} with $s= \sigma/2$ imply
\begin{align}\label{12}
|J_3|& \lesssim I_{R}^{\frac{1}{p}} \Big(\int_0^T\int_{\mathbb{R}^n}(\phi_R(x))^{-\frac{p'}{p}}\,(w(t))^{-\frac{p'}{p}} \,\big|D^{1+\alpha}_{t|T}\left(w(t)\right)\big|^{p'}\,\big|(-\Delta)^{\sigma/2} \phi_R(x)\big|^{p'}\, dx\,dt\Big)^{\frac{1}{p'}}\nonumber\\
& \lesssim I_{R}^{\frac{1}{p}} \Big(\int_0^T\,(w(t))^{-\frac{p'}{p}} \,\big|D^{1+\alpha}_{t|T}\left(w(t)\right)\big|^{p'}\,dt\Big)^{\frac{1}{p'}}\Big(\int_{\mathbb{R}^n}(\phi_R(x))^{-\frac{p'}{p}}\,\big|(-\Delta)^{\sigma/2} \phi_R(x)\big|^{p'}\, dx\Big)^{\frac{1}{p'}}\nonumber\\
& \lesssim I_{R}^{\frac{1}{p}}\,T^{\frac{1}{p'}-\alpha-1}R^{\frac{n}{p'}-\sigma}.
\end{align}
Using the estimates from (\ref{10}) to (\ref{12}) into (\ref{8}) we arrive at
\begin{eqnarray*}
&{}&I_R+C\,T^{-\alpha}\int_{\mathbb{R}^n}\big(u_1(x)+ \mu(-\Delta)^{\sigma/2}u_0(x)\big)\phi_R(x) \,dx-C\,T^{-1-\alpha}\int_{\mathbb{R}^n}u_0(x)\phi_R(x)\,dx\\
&{}&\qquad \lesssim I_{R}^{\frac{1}{p}}\left(R^{\frac{n}{p'}}\,T^{\frac{1}{p'}-2-\alpha}+T^{\frac{1}{p'}-\alpha-1}R^{\frac{n}{p'}-\sigma}\right)+\tilde{I}_{R}^{\frac{1}{p}}\,T^{\frac{1}{p'}-\alpha}R^{\frac{n}{p'}-2},
\end{eqnarray*}
that is,
\begin{eqnarray}\label{9}
&{}&I_R+C\,T^{-\alpha}\int_{\mathbb{R}^n}\big(u_1(x)+\mu(-\Delta)^{\sigma/2}u_0(x)\big)\phi_R(x) \,dx\nonumber\\
&{}&\qquad \lesssim I_{R}^{\frac{1}{p}}R^{\frac{n}{p'}}\,T^{\frac{1}{p'}-\alpha}\left(T^{-2}+ T^{-1}R^{-\sigma}\right)+\tilde{I}_{R}^{\frac{1}{p}}\,T^{\frac{1}{p'}-\alpha}R^{\frac{n}{p'}-2}+ C\,T^{-1-\alpha}\int_{\mathbb{R}^n}u_0(x)\phi_R(x)\,dx.\qquad
\end{eqnarray}
Due of (\ref{optimal9.1}) and the fact that $\phi_R\rightarrow 1$ as $R\rightarrow\infty$, there exists a sufficiently large constant $R_0> 0$ such that
\begin{equation}\label{13}
\int_{\R^n} \big(u_1(x)+ \mu(-\Delta)^{\sigma/2} u_0(x)\big)\phi_R(x)\, dx >0,
\end{equation}
for all $R > R_0$. Our considerations are splitted into the following several cases. \\

\noindent {\bf Case 1}: If $p<p_c$, then we take $R=T^{\frac{1}{2-\tilde{\sigma}}}$. Both (\ref{9}) and (\ref{13}) imply
$$
I_R\lesssim I_{R}^{\frac{1}{p}} T^{\frac{1}{p'}-\alpha+\frac{n}{(2-\tilde{\sigma})p'}-\frac{2}{2-\tilde{\sigma}}} +T^{-1-\alpha}\int_{\mathbb{R}^n}u_0(x)\phi_R(x)\,dx,
$$
where we have used the fact that $\tilde{I}_{R}\leq I_{R}$ and $\tilde{\sigma}\leq1$. Using the following Young's inequality: 
$$ab\leq\frac{1}{p}a^p+\frac{1}{p'}b^{p'}\quad \text{ for all }a,b>0,$$ 
we conclude that
\begin{equation}
\frac{1}{p'}I_R\lesssim \frac{1}{p'}T^{1-\alpha p'+\frac{n}{2-\tilde{\sigma}}-\frac{2p'}{2-\tilde{\sigma}}} +T^{-1-\alpha}\int_{\mathbb{R}^n}u_0(x)\phi_R(x)\,dx. \label{132}
\end{equation}
It is clear that the condition $p<p_c$ follows immediately $1-\alpha p'+\frac{n}{2-\tilde{\sigma}}-\frac{2p'}{2-\tilde{\sigma}}<0$. Therefore, letting $T\rightarrow\infty$ we infer that $u=0 $ a.e.. This implies, by using (\ref{9}) again (or directly from (\ref{4})), that 
$$\int_{\mathbb{R}^n}\big(u_1(x)+ \mu(-\Delta)^{\sigma/2}u_0(x)\big)\phi_R(x) \,dx \lesssim T^{-1}\int_{\mathbb{R}^n}u_0(x)\phi_R(x)\,dx
$$
for all $R,\,T\geq1$. Hence, letting $T\rightarrow\infty$ we obtain a contradiction to (\ref{optimal9.1}).\\

\noindent {\bf Case 2}: If $p=p_c$, that is, $1-\alpha p'+\frac{n}{2-\tilde{\sigma}}-\frac{2p'}{2-\tilde{\sigma}}=0$, then we can see the following estimate from (\ref{132}):
$$
\frac{1}{p'}I_R\lesssim \frac{1}{p'}+T^{-1-\alpha}\int_{\mathbb{R}^n}u_0(x)\phi_R(x)\,dx.
$$
Thus, it follows that $I_R \le C$ as $T \to \ity$. Using Beppo Levi's theorem on monotone convergence, we derive
$$\int_0^\infty\int_{\mathbb{R}^n}|u(t,x)|^p\,dx\,dt=\lim_{T\rightarrow\infty}\int_0^T\int_{\mathbb{R}^n}|u(t,x)|^p\,\tilde{\varphi}(t,x)\,dx\,dt= \lim_{T\rightarrow\infty}I_R \leq C, $$
that is, $u\in L^p((0,\infty)\times\mathbb{R}^n)$. On the other hand, taking $R=T^{\frac{1}{2-\tilde{\sigma}}}K^{-\frac{1}{2-\tilde{\sigma}}}$ with some constants $K\ge 1$ into (\ref{9}) we obtain
\begin{align*}
I_R &\lesssim I_{R}^{\frac{1}{p}} \Big(T^{-\frac{2(1-\tilde{\sigma})}{2-\tilde{\sigma}}}K^{-\frac{n}{(2-\tilde{\sigma})p'}}+ T^{-\frac{\sigma-\tilde{\sigma}}{2-\tilde{\sigma}}}K^{-\frac{n}{(2-\tilde{\sigma})p'}+\frac{\sigma}{2-\tilde{\sigma}}}\Big)+ \tilde{I}_{R}^{\frac{1}{p}}\,\,K^{-\frac{n}{(2-\tilde{\sigma})p'}+\frac{2}{2-\tilde{\sigma}}}\\ 
&\quad +T^{-1-\alpha}\int_{\mathbb{R}^n}u_0(x)\phi_R(x)\,dx,
\end{align*}
where we have used the fact that $p=p_c$. Using the following Young's inequality: 
$$ab\leq\frac{1}{p}a^p+\frac{1}{p'}b^{p'}\quad \text{ for all }a,b>0,$$ 
we conclude that
\begin{align}
\frac{1}{p'}I_R &\lesssim \frac{1}{p'}\Big(T^{-\frac{2(1-\tilde{\sigma})p'}{2-\tilde{\sigma}}}K^{-\frac{n}{2-\tilde{\sigma}}}+ T^{-\frac{(\sigma-\tilde{\sigma})p'}{2-\tilde{\sigma}}}K^{-\frac{n-\sigma p'}{2-\tilde{\sigma}}}\Big)+ \tilde{I}_{R}^{\frac{1}{p}}\,\,K^{-\frac{n}{(2-\tilde{\sigma})p'}+\frac{2}{2-\tilde{\sigma}}} \nonumber \\ 
& \quad +T^{-1-\alpha}\int_{\mathbb{R}^n}u_0(x)\phi_R(x)\,dx. \label{14}
\end{align}
If $\sigma\in(0,1]$, using the fact that $u\in L^p((0,\infty)\times\mathbb{R}^n)$, it holds
$$\lim_{T\rightarrow\infty} \tilde{I}_{R}=\lim_{T\rightarrow\infty} \int_0^T\int_{|x|\geq T^{\frac{1}{2-\sigma}}K^{-\frac{1}{2-\sigma}}}|u(t,x)|^p\,\tilde{\varphi}(t,x)\,dx\,dt=0. $$
Consequently, from (\ref{14}) we may arrive at
$$
\int_0^\infty\int_{\mathbb{R}^n}|u(t,x)|^p\,dx\,dt\lesssim K^{-\frac{n}{2-\sigma}}+K^{-\frac{n-\sigma p'}{2-\sigma}} \quad \text{ as } T \to \ity.$$
Finally, using again $p=p_c$ and $\tilde{\sigma}=\sigma$, we can check easily that $n>\sigma p'$. Therefore, taking $K$ big enough we obtain the desired result similarly to Case 1.\\
If $\sigma\in(1,2)$, using the fact that $u\in L^p((0,\infty)\times\mathbb{R}^n)$, it holds
$$\lim_{T\rightarrow\infty} \tilde{I}_{R}=\lim_{T\rightarrow\infty} \int_0^T\int_{|x|\geq TK^{-1}}|u(t,x)|^p\,\tilde{\varphi}(t,x)\,dx\,dt=0. $$
Consequently, from (\ref{14}) we may arrive at
$$
\int_0^\infty\int_{\mathbb{R}^n}|u(t,x)|^p\,dx\,dt\lesssim K^{-n} \quad \text{ as } T \to \ity,$$
where we have used that $\sigma-\tilde{\sigma}=\sigma-1>0$. Therefore, taking $K$ big enough we obtain the desired result similarly to Case 1.\\

\noindent {\bf Case 3}: If $p<\gamma^{-1}$, then this means that $\frac{1}{p'}-\alpha<0$. In this case, we suppose $R<T$ such that $T$ and $R$ cannot go to infinity simultaneously (or we can choose $R=\ln T$ as in \cite{Dabbicco2}). Therefore, by letting $T\rightarrow\infty$ and then $R$ big enough in (\ref{9}) we obtain the desired contradiction.
\end{proof}

\end{document}